\DeclarePairedDelimiter\floor{\lfloor}{\rfloor}
\definecolor{Gray}{gray}{0.92}
\definecolor{LightCyan}{rgb}{0.88,1,1}
\newcolumntype{a}{>{\columncolor{Gray}}c}
\newcolumntype{b}{>{\columncolor{white}}c}
\numberwithin{equation}{section}
\def\E{{\mathbb E}}
\def\P{{\mathbb P}}
\def\N{{\mathbb N}}
\newtheorem*{theorem*}{Theorem}
\newtheorem{theorem}{Theorem}[section]
\newtheorem{corollary}[theorem]{Corollary}
\newtheorem{definition}[theorem]{Definition}
\theoremstyle{definition}
\title{Urn models with two types of strategies}
\date{}
\author{Manuel Gonz\'alez-Navarrete and Rodrigo Lambert}
\begin{document}

\maketitle %
%\vspace{1cm} %
\thispagestyle{empty} %
\baselineskip=14pt

\vspace{2pt}

\begin{abstract}
%% Text of abstract
We study an urn process containing red and blue balls and two different strategies to reinforce the urn. Namely, a generalized P\'olya-type strategy versus an i.i.d. one. {At each step, one of the two reinforcement strategies is chosen by flipping a coin.} We study the {asymptotic} behaviour of this urn model, {and prove} a law of large numbers, a central limit theorem and a functional limit theorem for the proportion of balls into the urn. A phase transition is also stated.
\end{abstract}

\textbf{Keywords}: P\'olya-type processes, memory lapses, functional limit theorems.
%\tableofcontents
%
\medskip
%
%\bigskip

%%%%%%%%%%%%%%%%%%%%%%%%%%%%%%%%%%%%%%%%%%%%%%
%%%%%%%%%%%%%%%%%%%%%%%%%%%%%%%%%%%%%%%%%%%%%%
%%%%%%%%%%%%%%%%%%%%%%%%%%%%%%%%%%%%%%%%%%%%%%
%%%%%%%%%%%%%%%%%%%%%%%%%%%%%%%%%%%%%%%%%%%%%%

\section{Introduction}
\label{sec:intro}

{Urn models and its applications are} very well exploited themes in probability theory. Its literature is extensive and an essential survey of this issue can be found in the book of Mahmoud \cite{Mah}. The applications of urn processes are related to algorithm analisys and data structures, dynamical models of social networks and evolutionary game theory, among others \cite{JK2,Mah,Pem}.

One of the earliest {works} in the literature was the paper by Eggenberger and P\'olya \cite{EP}. This so-called P\'olya urn is stated as follows. An urn starts with an initial quantity of $R_0$ red and $B_0$ blue balls and draws are made sequentially. After each draw, the ball is replaced and other $a$ balls of the same color are added to the urn. {The authors were then interested in study the current composition of the urn}, at each time $n \in \N$. {Let us consider the following notation: {the} urn is represented by the two-dimensional} vector $U_n=(R_n,B_n)$, where $R_n$ and $B_n$ represent the number of red and blue balls at time $n$, respectively. {The goal here is to study the stochastic process $(U_n)_{n \in \mathbb{N}}$.}

Posteriorly, Friedman \cite{Fri} generalized such urn model and then several extensions have been studied (for details, see \cite{Mah}). We highlight here the Bagchi-Pal mechanism, which will be used in this work (for more details on this model, see \cite{BP}). The model is represented, as usual, by the so-called reinforcement or {\it replacement matrix}

\begin{eqnarray}
\label{matrix}
M=
\left( \begin{array}{cc}
a & c  \\
b & d  \end{array} \right) \ ; \ \  \vec{R}=(a,b)^T \ ; \ \vec{B}=(c,d)^T.
\end{eqnarray}

In that case, $\vec{R}$-column determines the balls to add if the chosen color is red. This means that we put $a$ red and $b$ blue balls. If a blue-colored ball is obtained we proceed to the $\vec{B}$-column. It indicates that we add $c$ red and $d$ blue balls into the urn. As in \cite{BP}, we suppose that the urn is \emph{balanced}, which means that $a+b=c+d=K$. Therefore, at each step we add a fixed number $K$ of balls.

{A general quantity of interest in the related literature is the asymptotic proportion of red balls on the urn. Namely, the authors usually investigate $R_n/(R_n+B_n)$, as $n\to \infty$. There are several approaches to address this problem. For instance, the use of generating functions and convergence of moments \cite{Fre,JK,Mah}, embedding the urn into continuous-time branching processes \cite{AK,Jan,Pem}, ideas based on convergence results for martingales \cite{Go1,Go2,KS} or combinatorial analytic and algebraic approaches \cite{FGP,Pou}.}

In this work we consider the possibility of having different mechanisms of adding the balls into the urn at each step. In other words, we imagine that there are two players designed to reinforce the urn with new balls. In particular, suppose that one of them is a {\it good worker} employing a generalized Bagchi-Pal scheme. On the other hand, second worker is a {\it carefree one}. Then with probability $p$ and independently of the current composition of the urn he (she) chooses color blue { ($\vec{B}$-column in \eqref{matrix}). Otherwise he uses $\vec{R}$-column.} Moreover, the player who will play at time $n$ is chosen according
to a Bernoulli sequence with probability of success $\theta$. That is, at each step player $\mathcal{A}$ is chosen with probability $\theta$ and player $\mathcal{B}$ with probability $1-\theta$.

We study the influence of the strategies in the asymptotic behaviour of the model. Moreover we complement the discussions {started in \cite{GL}, proposing an application of the memory lapses property for urn models (see definition \ref{defi}).} {In the context of the present paper}, a memory lapse is a sequence of consecutive replacements done by Player $\mathcal{B}$. That is, by an i.i.d strategy, without taking the composition {(or history)} of the urn into account.

The problem is addressed by considering the ideas introduced by Janson \cite{Jan,Jan2}, which allow us to characterize convergence of moments of $(R_n, B_n)$, by interpreting the urn in Section \ref{sec:the model} as an urn with random reinforcement matrix. 

The main results include convergence theorems for the proportion of red balls in the urn {(namely $R_n/( R_n+B_n)$). The first one is a strong law of large numbers. In the sequel, we prove a central limit theorem, and a functional central limit theorem. As a corollary, we show convergence for some particular processes as an application of our results.}

The rest of this paper is organized as follows. Section \ref{sec:the model} states the model, some examples and presents the main results. The proofs for the results are provided in Section \ref{sec:proof}.

\bigskip

\section{The urn process and main results}
\label{sec:the model}

Imagine a discrete-time urn process with initial $R_0$ red and $B_0$ blue balls. The composition of the urn at time $n \in \N$ is given by $(R_n,B_n)$, where $R_n$ means the number of red and $B_n$ the number of blue balls. The quantity of balls into the urn is $T_n=Kn+T_0$, where $K=a+b$ from matrix \eqref{matrix} and $T_0=R_0+B_0$.

Assume there exist two players which follow different strategies. One of them looks to the urn composition, and the other does not. Nonetheless, both players employ the replacement matrix \eqref{matrix}, each of them following a distinct rule. Now, we describe the dynamics of this process.

Player $\mathcal{A}$ follows a (generalized) P\'olya-type regime. That is, she (he) draws a ball uniformly at random, observe its color and put it back to the urn. Then she chooses with probability $p$ the same color and with probability $1-p$ the opposite color. Finally, she uses the replacement matrix \eqref{matrix}.

Player $\mathcal{B}$ behaves simpler. He (she) chooses a color, say blue or red, with probabilities $p$ and $1-p$, respectively. Then, he puts the balls into the urn following \eqref{matrix}. We remark that player $\mathcal{B}$ behaves independently of the current urn composition.

The player choice is made as follows. At each step, a coin is flipped, and the result says who will add the next balls into the urn. Particularly, player $\mathcal{A}$ is chosen with probability $\theta$.

In this sense, this process can be interpreted as an urn process with interferences that can switch off the access for its history. In view of stochastic modelling, this phenomena could be related to an environmental factor that removes the dependence on the current composition of the system.

At this point, we introduce an independent Bernoulli sequence $(Y_n)_{n\ge 0}$ with parameters $\theta$, such that a success denotes that the player $\mathcal{A}$ was chosen, and {$Y_n=0$ indicates that we choose player $\mathcal{B}$}. At this time, we present the notion of memory lapse. {Roughly speaking, a memory lapse is a period in which} the decisions are done without taking into account the history of the process. Its definition is given as follows.

\begin{definition}
\label{defi}
A {\it memory lapse} in the urn model $(U_n)_{n\ge 0}$ is an interval $I\subset \N$ such that $Y_i=0$ for all $i\in I$ and there is no interval $J \supset I$ such that $Y_i=0$ for all $i \in J$. The length of the lapse is given by  $\ l=|I|$.
\end{definition}

In some sense, we think this period as a lapse because after these, the model always will recover the dependence on the whole past. This includes the consequences of the decisions taken in the memory lapse period. In view of the problem of this paper, it means that the law of player $\mathcal{A}$ is influenced by player's $\mathcal{B}$ choices (provided that player $\mathcal{B}$ assume the game for a period).

\subsection{Examples}

In what follows we remark some particular cases in the literature, which can be obtained by taking specific values on the parameters $p$ and $\theta$. We refer the reader to Figure \ref{fig:Region} for an illustration of these cases.

\begin{itemize}
\item[($i$)] Letting $\theta=1$, the player $\mathcal{A}$ is always chosen. Therefore \vspace{-.2cm}
\begin{itemize}
\item[($i.1$)] If $p=1$ we have the original Bagchi-Pal urn; \vspace{.2cm}
\item[($i.2$)] The case $p=0$ is like a color blind situation, where we choose the opposite color, and the replacement matrix \eqref{matrix} has its columns changed to obtain again the Bagchi-Pal mechanism.
\end{itemize}
\item[($ii$)] If $\theta=0$, player $\mathcal{B}$ is always chosen and we obtain a family of random walks with independent steps. In particular, both cases {$p=0$ or $p=1$} are deterministic. 
\item[($iii$)] For $p=1/2$ we have a class of ``symmetric'' random walks, for all $\theta \in [0,1]$. In this case, it doesn't matter what strategy is employed, always with probability $1/2$ we choose $(a,b)$ balls to reinforce the urn.
\item[($iv$)] For $p=1$ (respect. $p=0$), we obtain the Bagchi-Pal (respect. color blind Bagchi-Pal) urn with deterministic noise, depending on $\theta \in (0,1)$.
\end{itemize}

\begin{figure}[h!]
\begin{center}
\epsfig{file=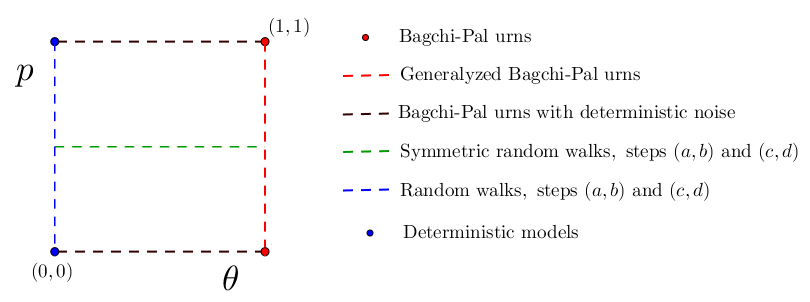, height=4.5cm}
\end{center}
\caption{Characterization of the models included in the urn process above.}
\label{fig:Region}
\end{figure}

Now, we analyse particular cases of reinforcement matrix \eqref{matrix}. We realize the evolution by plotting $(R_n,B_n)$, moving at each step in the direction of one of the column-vectors $\vec{R}$ or $\vec{B}$. For instance, let $R_0=B_0=1$ and see Figure \ref{fig:Knight} for an illustration. The first one is an urn process as an analogy with a random walk done by chess knight piece, which we call the knight random walk (KRW). Note that $\vec{R}=(2,1)^T$, $\vec{B}=(1,2)^T$ and $K=3$, as a similar model, we analyse $a=3$, $c=1$ and $K=3$.

As a consequence of the law of large numbers (see Theorem \ref{LGN}), we remark for instance, the case $p=1/2$, then the random vector $\left(\frac{R_n}{T_n}, \frac{B_n}{T_n}\right)$ converges almost surely to $\left(\frac{a+c}{2K}, 1 - \frac{a+c}{2K}\right)$, as $n$ diverges. This means that the examples  in Figure \ref{fig:Knight} fluctuate around the green dotted lines. In other words, these random walks are symmetric around the state-space's diagonal.

\begin{figure}[ht]
\begin{center}
\subfigure[]{
\epsfig{file=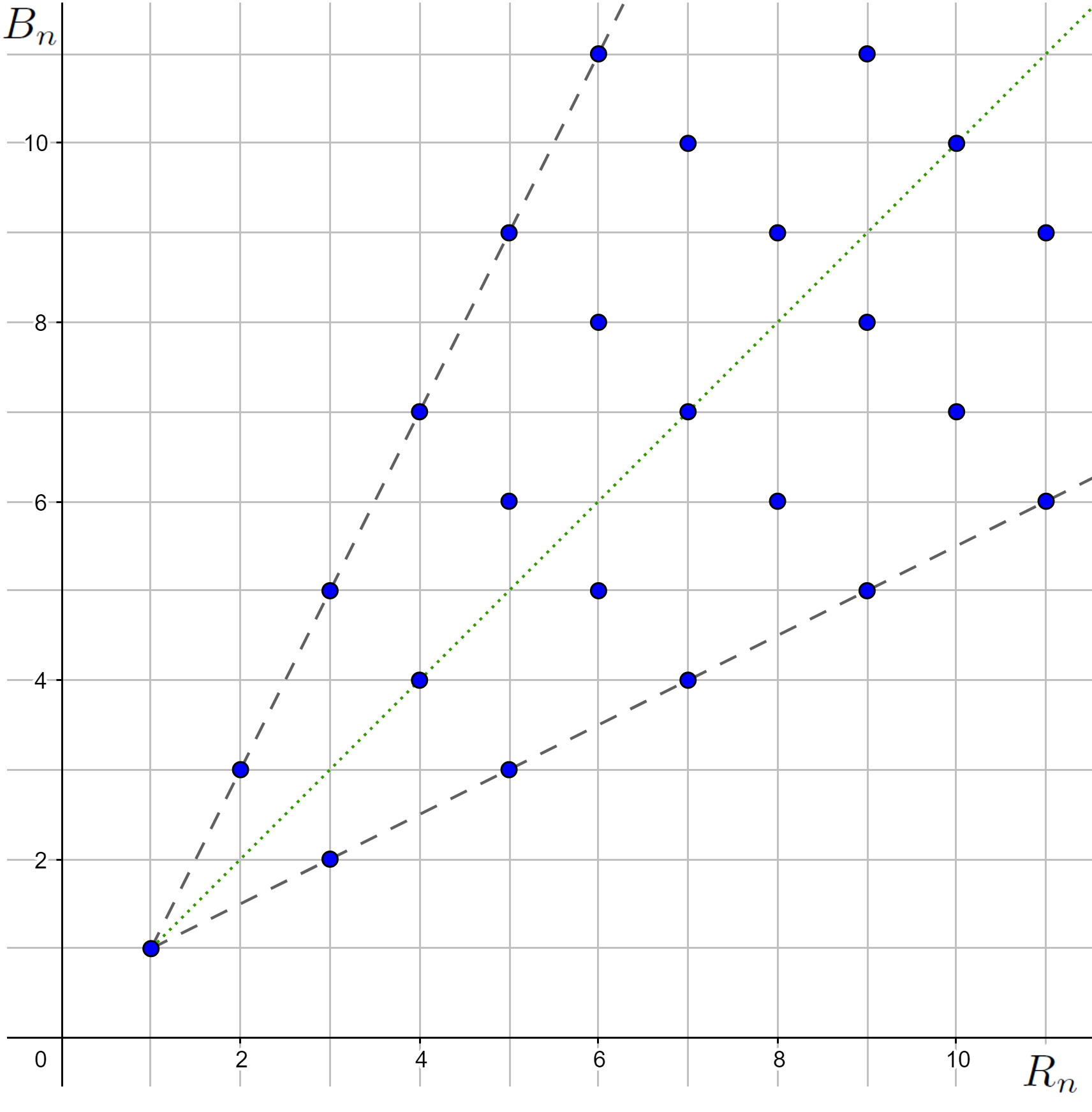, height=5cm}
}
\hspace{1cm}
\subfigure[]{\epsfig{file=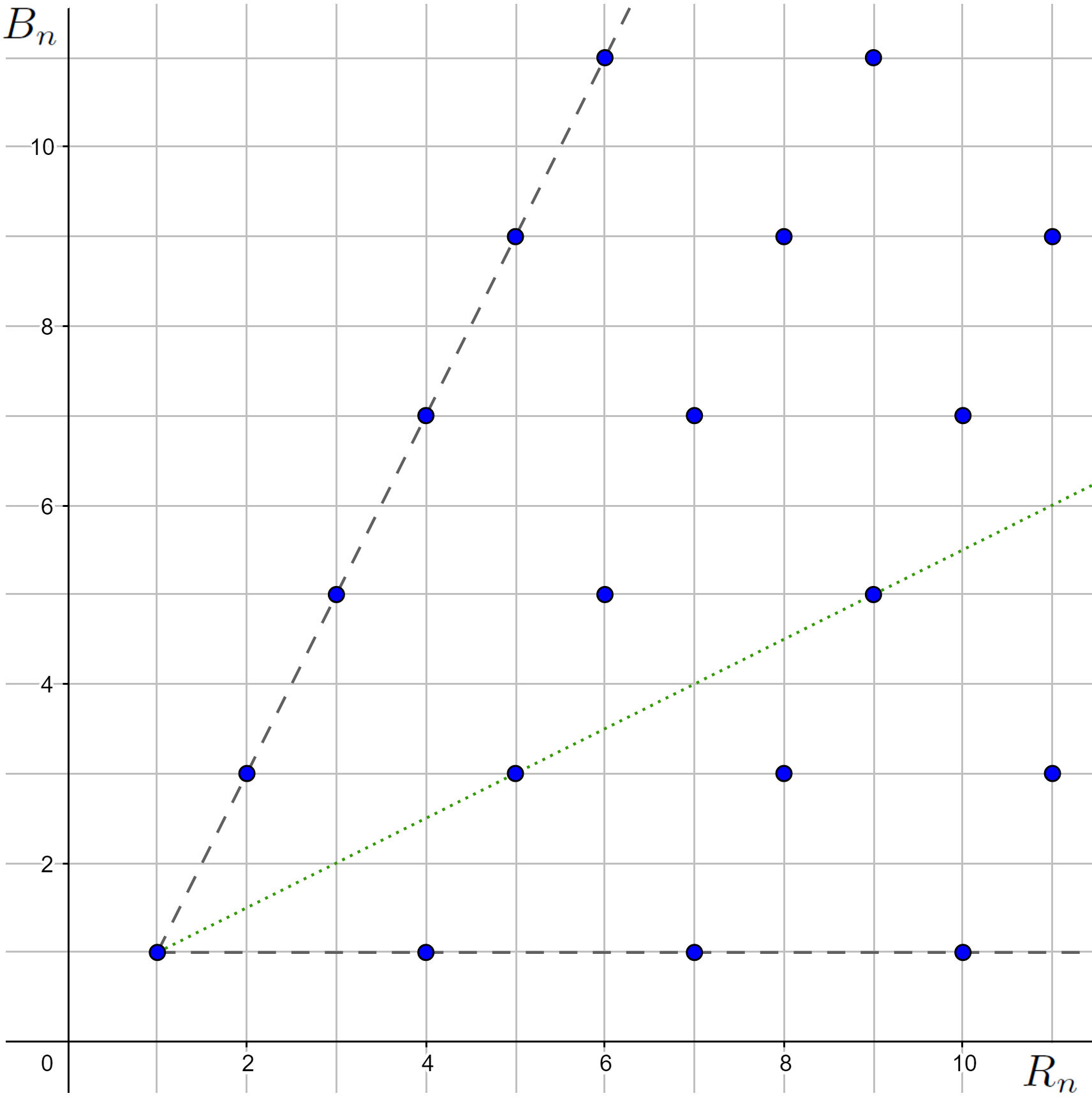, height=5cm}
}
\end{center}
\caption{State-space of the two examples with $K=3$, starting at $(R_0,B_0)=(1,1)$. The gray dashed lines represent the deterministic cases, that is, $\theta=0$, and $p\in \{0,1\}$.}
\label{fig:Knight}
\end{figure}

\subsection{Main results}

As usual, we restrict the attention to a class of well-behaved urn processes. In this line, we assume that the urn is {\it tenable}, i.e., that it is impossible to get stuck. Formally, assume that

\begin{itemize}

\item [(a)] $T_0>0$.

\item [(b)] $T_n=Kn+T_0$, for some $K\ge 1$.

\item [(c)] The urn $R_n$ is not deterministic, that is: $a\neq c$.

\item [(d)] All entries in \eqref{matrix} are {non-negative}.
\end{itemize}

{At this point we recall some terminology from the literature, and also made some remarks. Assumption (b) says that the urn is balanced, and assumption (d) is stated in order to simplify the analysis. However the case of (possible) negative entries can be studied by using similar tools.}
{

In what follows we deduce the conditional probabilities for this urn model. For simplicity, we look to the distribution of the one-dimensional stochastic process $(R_n)_{n \in \mathbb{N}}$, which count the numbers of red balls at time $n$. As stated in the urn dynamics in Section \ref{sec:the model}, we recall that $\P(Y_n=1)= \theta$, for all $n$ and that $R_n$ increases $a$-unities if and only if $\vec{R}$-column is chosen. Thus, if $Y_n=1$ player $\mathcal{A}$ is chosen. Since with probability $p$ this player uses the ``correct'' color selected from the urn (and with probability $1-p$ she(he) uses the opposite), we get
\begin{eqnarray}
\label{condprobA}
 \P(R_{n+1} = r+a | R_n = r, Y_n = 1) & =& p \frac{r}{T_n} + (1-p) \left(1-\frac{r}{T_n}\right)  \\
 & = & 1-p + (2p-1)  \frac{r}{T_n} \ .
\end{eqnarray}
Otherwise, if player $\mathcal{B}$ is chosen, then he(she) chooses $\vec{R}$ replacement vector with probability $1-p$. Therefore we get
\begin{equation}
\label{condprobB}
 \P(R_{n+1} = r+a | R_n = r, Y_n = 0) =  1-p \ .
\end{equation}

Therefore we combine the last two equations to conclude that $(R_n)_n$ evolves with transition probabilities given by
\begin{equation}
\label{condprob}
 \P(R_{n+1} = r+a | R_n = r, Y_n = y) =  1-p + (2p-1) y \frac{r}{T_n},
\end{equation}
and $ \P(R_{n+1} = r+c | R_n = r, Y_n = y) = 1-  \P(R_{n+1} = r+a | R_n = r, Y_n = y)$.

}

Now we are ready to present the main results about convergence of the random vector $(R_n, B_n)$. The first one is a strong law of large numbers, and is stated as follows.

\begin{theorem}
\label{LGN}

Let a tenable urn process {$(U_n)_{n \in \mathbb{N}}=(R_n,B_n)_{n \in \mathbb{N}}$ defined above. %If $\lim_{n\to \infty} \theta = \theta \in [0,1]$ 
If $\theta(2p-1)(a-c)< K$ we get the following almost-surely convergence}
\begin{equation}
\label{LLNR}
\lim_{n \to \infty}\left(\frac{R_n}{T_n},\frac{B_n}{T_n}\right) = \left(\frac{pc+(1-p)a}{K - \theta(2p-1)(a-c)},\frac{K- c-(a-c)(\theta(2p-1)+(1-p))}{K - \theta(2p-1)(a-c)}\right).
\end{equation}
\end{theorem}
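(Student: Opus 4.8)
The plan is to reduce the vector statement to a one-dimensional stochastic approximation for the red proportion $X_n := R_n/T_n$, since $B_n = T_n - R_n$ forces $B_n/T_n = 1 - X_n$, so the second coordinate follows at once from the first (and one checks by a short algebraic manipulation that $1 - x^\ast$ coincides with the announced second entry of \eqref{LLNR}). First I would pass from the one-step law \eqref{condprob} to the conditional mean of the increment $\xi_{n+1} := R_{n+1} - R_n \in \{a,c\}$. Averaging \eqref{condprob} over the independent $Y_n \sim \mathrm{Bernoulli}(\theta)$ gives $\P(\xi_{n+1} = a \mid \mathcal F_n) = (1-p) + (2p-1)\theta X_n$, whence
\begin{equation*}
\E[\xi_{n+1} \mid \mathcal F_n] = \mu + \lambda X_n, \qquad \mu := (1-p)a + pc, \quad \lambda := \theta(2p-1)(a-c).
\end{equation*}
Writing $T_{n+1} = T_n + K$ and expanding $X_{n+1} - X_n = (\xi_{n+1} - KX_n)/(T_n+K)$, this yields the recursion
\begin{equation*}
\E[X_{n+1} - X_n \mid \mathcal F_n] = \frac{(K-\lambda)}{T_n+K}\,\bigl(x^\ast - X_n\bigr), \qquad x^\ast := \frac{\mu}{K-\lambda},
\end{equation*}
which is well defined precisely because the hypothesis $\lambda = \theta(2p-1)(a-c) < K$ makes $K - \lambda > 0$; note that $x^\ast$ equals the claimed first entry in \eqref{LLNR}.

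Second, I would establish almost-sure convergence $X_n \to x^\ast$ through the centered quantity $V_n := X_n - x^\ast$ and a supermartingale estimate on $V_n^2$. Using the displayed drift together with the bound $\mathrm{Var}(\xi_{n+1}\mid\mathcal F_n) \le (a-c)^2$ one obtains
\begin{equation*}
\E[V_{n+1}^2 \mid \mathcal F_n] \le \Bigl(1 + \tfrac{(K-\lambda)^2}{(T_n+K)^2}\Bigr)V_n^2 - \frac{2(K-\lambda)}{T_n+K}\,V_n^2 + \frac{(a-c)^2}{(T_n+K)^2}.
\end{equation*}
Since $T_n = Kn + T_0$, the terms $(T_n+K)^{-2}$ are summable while $(T_n+K)^{-1}$ is not; because $K - \lambda > 0$ the middle term is a nonnegative ``drift toward $0$''. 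This is exactly the hypothesis of the Robbins--Siegmund almost-sure convergence theorem, which yields that $V_n^2$ converges a.s. and that $\sum_n (K-\lambda)(T_n+K)^{-1}V_n^2 < \infty$ a.s. As $\sum_n (T_n+K)^{-1} = \infty$, the limit of $V_n^2$ must be $0$, giving $X_n \to x^\ast$ almost surely; the companion coordinate then converges to $1 - x^\ast$ as noted.

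The main obstacle I anticipate is obtaining convergence in the almost-sure sense (rather than merely in $L^2$ or in probability) uniformly across the whole parameter region $\lambda < K$: the normalizing martingale $M_n := (R_n - x^\ast T_n)\prod_{k<n}(1+\lambda/T_k)^{-1}$ is $L^2$-bounded only when $2\lambda/K > 1$, so a naive martingale-convergence argument would force an awkward split into subcases according to the sign and size of $\lambda$. Routing the argument through the Robbins--Siegmund lemma sidesteps this, since it only requires summability of the positive perturbation $(T_n+K)^{-2}$ and nonnegativity of the contractive term, both of which hold throughout $\lambda < K$. It is also here that the role of the hypothesis becomes transparent: if $\lambda \ge K$ the drift term changes sign and $x^\ast$ ceases to be attracting, which is the mechanism behind the phase transition announced in the abstract. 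Alternatively, one could cast the model as an urn with random replacement matrix, average over $Y_n$, and invoke Janson's convergence theorems; the computation of $\mu$, $\lambda$ and $x^\ast$ is identical.
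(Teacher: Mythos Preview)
Your argument is correct, but it proceeds along a genuinely different line from the paper's proof. The paper recasts the model as a two-colour urn with \emph{random} replacement columns $\xi_1,\xi_2$, computes the mean replacement matrix $A=(\E\xi_{ij})$, diagonalises it to obtain $\lambda_1=K$, $\lambda_2=\theta(2p-1)(a-c)$ and the corresponding right eigenvector $v_1$, and then simply invokes Janson's Theorem~3.21, which gives $n^{-1}(R_n,B_n)\to\lambda_1 v_1$ almost surely; dividing by $T_n/n\to K$ yields the stated limit. No recursion for $X_n$ or supermartingale inequality is written down.

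Your approach is more elementary and entirely self-contained: you extract the one-dimensional drift $\E[X_{n+1}-X_n\mid\mathcal F_n]=\tfrac{K-\lambda}{T_n+K}(x^\ast-X_n)$ and close the argument with the Robbins--Siegmund lemma, bypassing Janson's branching-process machinery. This makes the role of the hypothesis $\lambda<K$ completely explicit (it is exactly what makes the drift contractive), and it avoids the case split you correctly flagged for a naive martingale argument. The paper's route, on the other hand, is shorter in context because the eigenstructure of $A$ has to be computed anyway for the CLT and functional CLT that follow; once $v_1$ is in hand, the LLN is a one-line citation. Your closing remark that one could alternatively ``cast the model as an urn with random replacement matrix \ldots\ and invoke Janson's convergence theorems'' is in fact precisely what the paper does.
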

\medskip

We also obtain versions of the central limit theorem for it. Firstly we present two quantities that will appear in the following result. Let $\omega_1$ and $\omega_2$ two functions on the parameters of the model given by
\begin{equation}
\label{omegas}
\omega_1 = K-2c-(a-c)(\theta(2p-1)+2(1-p))\  \text{and}   \ \omega_2=c+(a-c)(1-p).
\end{equation}

Now we can present our CLTs. A general form can be stated as follows.

\bigskip

\begin{theorem}\label{CLT} Suppose that the hypothesis of Theorem \ref{LGN} are satisfied. Assume in addition that $p\neq 1/2$.

\begin{itemize}
\item[(i)]\textbf{The diffusive case:}  If $2\theta(2p-1)(a-c)< K$ then
{\small
\begin{equation*}
\label{teoi}
\frac{1}{\sqrt{n}}\left[(R_n,B_n) - n\left(\frac{pc+(1-p)a}{K - \theta(2p-1)(a-c)},1-\frac{pc+(1-p)a}{K - \theta(2p-1)(a-c)}\right) \right] \xrightarrow{d} \mathcal{N}(0,\Sigma_1)
\end{equation*}
}
with covariance matrix $\Sigma_1$ given by
\begin{equation*}
\Sigma_1 = \dfrac{\omega_1^2\alpha + 2\omega_1\omega_2\beta+\omega_2^2K^2}{(K-2\theta(a-c)(2p-1))(K-\theta(a-c)(2p-1))^2}
\left( 
\begin{array}{cc}
1
& 
-1
\\
-1
& 
1
\end{array}
\right)
\end{equation*}
where

$$
\alpha = c^2+(a^2-c^2)\left(\frac{K(1-p)+c\theta(2p-1)}{K-\theta(a-c)(2p-1)}\right) \ \ \text{and} \ \ \beta = K\left(\frac{\alpha + ac}{a+c}\right).
$$
\item[(ii)] \textbf{The critical case:} If $2\theta(2p-1)(a-c) = K$ then
{\small
\begin{equation}
\label{teoii}
\frac{1}{\sqrt{n\log(n)}}\left[(R_n,B_n) - n\left(\frac{2(pc+(1-p)a)}{K},1-\frac{2(pc+(1-p)a)}{K}\right) \right] \xrightarrow{d} \mathcal{N}(0,\Sigma_2)
\end{equation}
}
with $\Sigma_2$ given by

\begin{equation*}
\Sigma_2 = \dfrac{\omega_{1}^2\alpha_c + 2\omega_{1}\omega_{2}\beta_c+\omega_{2}^2K^2}{(K-\theta(a-c)(2p-1))^2}
\left( 
\begin{array}{cc}
1
& 
-1
\\
-1
& 
1
\end{array}
\right)
\end{equation*}
where the critical constants are given by %$\omega_1$ and $\omega_2$ as above and %the critical constants
$$
 \omega_1 = \frac{K}{2}-2a+2p(a-c), \ \alpha_c = c^2+2(a+c)\left((a-c)(1-p)+\frac{c}{2}\right), \ \beta_c = K\left(\frac{\alpha_c + ac}{a+c}\right)$$
and $\omega_2$ as in \eqref{omegas}.
\end{itemize}
\end{theorem}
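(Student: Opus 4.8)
The plan is to recognize the urn as a generalized Pólya urn with a \emph{random} replacement matrix and then invoke the machinery of Janson \cite{Jan,Jan2}, which delivers both the diffusive and the critical CLT once the eigenvalue structure of the expected replacement operator is understood. Concretely, at each step the increment of $(R_n,B_n)$ is driven by \eqref{condprob}: conditional on the current state and the Bernoulli variable $Y_n$, the vector $R_n$ increases by $a$ (and $B_n$ by $K-a$) or by $c$ (and $B_n$ by $K-c$). Averaging over $Y_n$ and over the draw, the expected one-step increment is an affine function of the proportion $R_n/T_n$, and the linearization of this map around the deterministic limit from Theorem \ref{LGN} produces a $2\times 2$ \emph{intensity} (generator) matrix $A$. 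Because the urn is balanced, one eigenvalue of $A$ equals the growth rate $K$ (the Perron direction fixing total mass $T_n=Kn+T_0$), and the second, \emph{subdominant}, eigenvalue is precisely $\lambda_2 = \theta(2p-1)(a-c)$. The entire phase transition is then dictated by the classical Pólya-urn dichotomy: the fluctuations are of order $\sqrt{n}$ when $\operatorname{Re}\lambda_2 < K/2$ (diffusive, part (i)), and of order $\sqrt{n\log n}$ exactly at the boundary $\lambda_2 = K/2$, i.e. $2\theta(2p-1)(a-c)=K$ (critical, part (ii)).

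First I would set up the state increment $\Delta_n = (R_{n+1},B_{n+1}) - (R_n,B_n)$ and compute $\E[\Delta_n \mid \mathcal{F}_n]$ from \eqref{condprob}, writing it as $\frac{1}{T_n}$ times a linear operator applied to $(R_n,B_n)$ plus a constant, thereby identifying $A$ and confirming its eigenvalues $K$ and $\theta(2p-1)(a-c)$. Next I would compute the conditional covariance matrix of $\Delta_n$, again averaging over the independent choice of player via $Y_n$ and over the color drawn; since all increments live on the anti-diagonal line $\{(x,-x)\}$ (because $R$ and $B$ increments sum to the constant $K$), the limiting variance collapses to a scalar multiplying $\bigl(\begin{smallmatrix} 1 & -1 \\ -1 & 1\end{smallmatrix}\bigr)$, explaining the rank-one form of both $\Sigma_1$ and $\Sigma_2$. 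Evaluating this scalar at the a.s.\ limit of $R_n/T_n$ (from Theorem \ref{LGN}) is what produces the constants $\alpha$ and $\beta$; one then verifies that the stated normalizing denominators $(K-2\lambda_2)(K-\lambda_2)^2$ and $(K-\lambda_2)^2$ are exactly the factors that Janson's formula for the asymptotic variance prescribes in the strictly-diffusive and critical regimes respectively.

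With the local mean and covariance in hand, the convergence itself follows either from Janson's general limit theorems for branching-process-embedded urns, or more self-containedly by stochastic-approximation / martingale methods: I would write $Z_n = (R_n,B_n) - \E(R_n,B_n)$, decompose it into a martingale driven by the centered increments, and apply the martingale CLT (for instance Hall--Heyde) with the Lindeberg condition holding trivially since increments are uniformly bounded by $K$. The time-rescaling by the eigenvalue gap converts the discrete recursion into the Ornstein--Uhlenbeck-type limit whose stationary variance gives $\Sigma_1$; in the critical case the marginal integrability of $\sum 1/n$ against the squared growth factor is what forces the extra $\log n$ and yields $\Sigma_2$. The assumption $p\neq 1/2$ is exactly what guarantees $\lambda_2 \neq 0$ so that the subdominant direction is nondegenerate and the affine drift is genuinely contracting toward the Pólya limit rather than behaving like the symmetric random walk of example (iii).

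The main obstacle I anticipate is \emph{not} the convergence step, which is standard once the urn is cast in Janson's framework, but the bookkeeping required to verify that the explicit algebraic expressions for $\Sigma_1$ and $\Sigma_2$ --- in particular the exact forms of $\alpha$, $\beta$, $\omega_1$, $\omega_2$ and their critical counterparts $\alpha_c$, $\beta_c$ --- match the output of the general variance formula after substituting the limiting proportion. The delicacy is twofold: one must correctly average the increment second moments over \emph{both} sources of randomness (the player selection $Y_n$ and the color draw, which are coupled only through the state), and one must track how the randomness of the replacement matrix itself contributes an additional term to Janson's covariance beyond the naive ``frozen-matrix'' computation. Getting these contributions to assemble into the quoted closed forms, and checking the continuity of the constants across the phase boundary (so that $\alpha_c,\beta_c$ are genuinely the $2\lambda_2\to K$ limits of $\alpha,\beta$), is where the real computational care is needed.
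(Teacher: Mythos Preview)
Your proposal is correct and follows essentially the same approach as the paper: cast the urn as a balanced P\'olya urn with random replacement columns $\xi_1,\xi_2$, identify the mean matrix $A$ with eigenvalues $\lambda_1=K$ and $\lambda_2=\theta(2p-1)(a-c)$, and invoke Janson's Theorems 3.22 and 3.23 to obtain the diffusive and critical CLTs, the covariance reducing to $\frac{u_2^T B u_2}{\lambda_1-2\lambda_2}\,v_2v_2^T$ (resp.\ $u_2^T B u_2\,v_2v_2^T$) with $B=\sum_i v_{1i}\E[\xi_i\xi_i^T]$. One small inaccuracy: your explanation of the hypothesis $p\neq 1/2$ is off---when $p=1/2$ one has $\lambda_2=0$, which is perfectly benign for Janson's diffusive regime; the exclusion is rather because in that symmetric case the model degenerates to the i.i.d.\ random walk of example (iii) and the statement becomes trivial, not because the machinery breaks down.
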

%}

In what follows we present the continuous-time version for Theorem \ref{CLT}. 
We recall that this convergence holds on the function space $D[0, \infty )$ of right-continuous with left-hands limits, that is, the so-called \emph{Skorohod space}.

\begin{theorem} \label{continuous}
 Suppose that the hypothesis of Theorem \ref{CLT} are satisfied.

\begin{itemize}
\item[(i)]  If $2\theta(2p-1)(a-c)< K$ then, for $n \to \infty$, in $D[0, \infty)$

\begin{equation}
\label{teo2i}
\frac{1}{\sqrt{n}}\left[(R_{\floor*{tn}},B_{\floor*{tn}}) - tn\left(\frac{pc+(1-p)a}{K - \theta(2p-1)(a-c)},1-\frac{pc+(1-p)a}{K - \theta(2p-1)(a-c)}\right) \right] \xrightarrow{d} W_t,
\end{equation}
where $W_t$ is a continuous bivariate Gaussian process with $W_0=(0,0)$, $\E(W_t)=(0,0)$ and, for $0 < s \le t$,

\begin{equation}
\label{corri}
\E(W_s W_t^T) = s\left(\frac{t}{s}\right)^{\theta(2p-1)(a-c)}\Sigma_1,
\end{equation}
where $\Sigma_1$ is given in Theorem \ref{CLT}-(i).

\item[(ii)]  If $2\theta(2p-1)(a-c) = K$ then, for $n \to \infty$, in $D[0, \infty)$

{\small
\begin{equation}
\label{teo2ii}
\frac{1}{\sqrt{n^t\log(n)}}\left[(R_{\floor*{n^t}},B_{\floor*{n^t}}) - n^t\left(\frac{2(pc+(1-p)a)}{K},1-\frac{2(pc+(1-p)a)}{K}\right) \right] \xrightarrow{d} W_t,
\end{equation}
}
where $W_t$ as above and for $0 < s \le t$,

\begin{equation}
\label{corrii}
\E(W_s W_t^T) = s\Sigma_2,
\end{equation}
where $\Sigma_2$ is given in Theorem \ref{CLT}-(ii).
\end{itemize}
\end{theorem}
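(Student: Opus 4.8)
The plan is to reduce the assertion to a one-dimensional functional limit theorem for $(R_n)_n$ and then to apply a martingale invariance principle. Because $T_n=Kn+T_0$ is deterministic and $B_n=T_n-R_n$, the centered bivariate vector is the image of the scalar fluctuation $R_n-\E(R_n)$ under the fixed linear map $x\mapsto(x,-x)$; this is exactly what produces the rank-one matrix common to $\Sigma_1$ and $\Sigma_2$, so that both covariance matrices equal the scalar limiting variance times that fixed matrix. It therefore suffices to establish the scalar process-level statement and transport it through this map by the continuous mapping theorem, which also refines the finite-dimensional CLT of Theorem \ref{CLT} to the level of $D[0,\infty)$.

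From the transition probabilities \eqref{condprob}, taking conditional expectations gives $\E(R_{n+1}\mid\mathcal F_n)=(1+\gamma/T_n)R_n+\delta$, with $\gamma=\theta(2p-1)(a-c)$ and $\delta=pc+(1-p)a$. Letting $\mu_n=\E(R_n)$ solve the same affine recursion and setting $c_n=\prod_{k=0}^{n-1}(1+\gamma/T_k)$, the process $M_n=(R_n-\mu_n)/c_n$ is an $\mathcal F_n$-martingale. Since $T_k=Kk+T_0$, one has $c_n\sim C\,n^{\rho}$ with $\rho=\gamma/K=\theta(2p-1)(a-c)/K$, and the dichotomy $2\gamma<K$ versus $2\gamma=K$ of the theorem corresponds exactly to $\rho<1/2$ versus $\rho=1/2$. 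The predictable quadratic variation is $\langle M\rangle_n=\sum_{k\le n}c_k^{-2}\,\mathrm{Var}(R_k-R_{k-1}\mid\mathcal F_{k-1})$, and by Theorem \ref{LGN} the conditional variances converge almost surely to an explicit constant built from $\omega_1,\omega_2,\alpha,\beta$ (respectively the critical constants $\alpha_c,\beta_c$). Hence $\langle M\rangle_n\sim\kappa\,n^{1-2\rho}$ in the diffusive case and $\langle M\rangle_n\sim\kappa'\log n$ in the critical case, which already fixes the two spatial normalizations $\sqrt n$ and $\sqrt{n\log n}$.

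To obtain the process-level statement I would rescale $M$ into a càdlàg process on the appropriate time grid and apply the functional martingale central limit theorem. Two inputs are needed: the conditional Lindeberg condition, which is immediate because the increments $R_{k+1}-R_k\in\{a,c\}$ are bounded and so $\sup_k|\Delta M_k|=O(c_n^{-1})\to0$; and convergence, uniformly on compacts, of $\langle M\rangle$ under the chosen reparametrization. In the diffusive case the time change is the linear one $t\mapsto\floor*{tn}$; undoing $R_n=\mu_n+c_nM_n$ identifies the limit as the Gaussian process $W_t=t^{\rho}B_{t^{1-2\rho}}$ (vectorized through $x\mapsto(x,-x)$), whose covariance is $s^{1-\rho}t^{\rho}\Sigma_1=s(t/s)^{\rho}\Sigma_1$, matching \eqref{corri}. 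In the critical case $\rho=1/2$ the fluctuations are of order $\sqrt{n\log n}$ and the time change that renders the quadratic variation linear is the logarithmic one $t\mapsto\floor*{n^t}$, under which the limit is a Brownian motion with $\E(W_sW_t^T)=s\Sigma_2$, matching \eqref{corrii}.

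The main obstacle is the passage from finite-dimensional to functional convergence, which is the genuinely new content beyond Theorem \ref{CLT}. First, tightness in $D[0,\infty)$ must be proved; here the bounded increments together with Doob's maximal inequality applied to $M$ should furnish the modulus-of-continuity control required by Aldous' criterion. Second, and more delicate, is the critical regime, where the exponential time change $t\mapsto n^t$ and the extra $\sqrt{\log n}$ factor must be reconciled with the invariance principle: one must verify that $\langle M\rangle$ converges to a \emph{linear} function of $t$ in this logarithmic clock, and not merely at fixed times. Once tightness and the convergence of the rescaled predictable quadratic variation are in hand, identifying the limit and evaluating the explicit constants reduce to the bookkeeping already performed for Theorem \ref{CLT}.
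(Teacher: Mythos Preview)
Your approach is correct in outline but takes a genuinely different route from the paper. The paper does not build a martingale at all: having already recast the model (in the proof of Theorem~\ref{CLT}) as a balanced P\'olya urn with random replacement columns $\xi_1,\xi_2$ given by \eqref{Proba}, it simply invokes Janson's functional limit theorems (Theorem~3.31 and Remark~5.7 of \cite{Jan}) as a black box. The convergence in $D[0,\infty)$, the Gaussianity of $W_t$, and the covariance formula $\E(W_sW_t^T)=s\,\Sigma_1\,e^{\log(t/s)A^T}$ are all delivered by that general result; the only remaining work is to diagonalize $A$ and observe that $\Sigma_1\propto v_2v_2^T$, so that the exponential acts as the scalar $(t/s)^{\lambda_2}$. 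The critical case is handled the same way via Theorem~3.31(ii) with $d=0$.

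Your martingale route---centering $R_n$, normalizing by $c_n\sim n^{\rho}$ with $\rho=\lambda_2/\lambda_1$, and applying a martingale invariance principle with bounded increments---is the strategy of \cite{Go2} and \cite{BB}, and is more self-contained: it avoids the branching-process embedding and makes the origin of the time change $t\mapsto n^t$ in the critical case transparent through the growth of $\langle M\rangle_n$. The price is that you must supply tightness and the uniform convergence of the predictable bracket yourself, whereas the paper outsources all of this to \cite{Jan}. Your reduction to one dimension via $B_n=T_n-R_n$ is also cleaner than tracking a $2\times2$ matrix throughout. One point to watch: your covariance computation yields the exponent $\rho=\gamma/K=\lambda_2/\lambda_1$ in $s(t/s)^{\rho}\Sigma_1$, whereas the paper's stated formula \eqref{corri} carries the exponent $\lambda_2=\theta(2p-1)(a-c)$; these agree only when $K=1$, so you should reconcile this carefully with Janson's Remark~5.7 before claiming the formulas match.
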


The proofs of these results will be given in Section \ref{sec:proof}. The following result is an immediate consequence of the previous theorem, and characterizes some processes obtained for particular matrices \eqref{matrix}. In the last two cases, the parametric space will be splitted by the critical curve $p_c = \frac{K}{a-c} \frac{1}{4\theta} + \frac{1}{2}$ (see Figure \ref{fig:critical} for an illustration). We state it here without proof.

\bigskip

\begin{corollary} 
\label{coro}
The following three results hold

\begin{itemize}
 \item[(i)] Let $a=d=2$ and $b=c=1$ the KRW. Suppose that the hypothesis of Theorem \ref{CLT} are satisfied. Therefore, for all $0\le p, \theta \le 1$, for $n \to \infty$, in $D[0, \infty)$

\begin{equation*}
\frac{1}{\sqrt{n}}\left[(R_{\floor*{tn}},B_{\floor*{tn}}) - tn\left(\frac{2-p}{3 - \theta(2p-1)},1-\frac{2-p}{3 - \theta(2p-1)}\right) \right] \xrightarrow{d} W_t,
\end{equation*}
where $W_t$ as defined in Theorem \ref{continuous} and, for $0 < s \le t$, $\E(W_s W_t^T) = s\left(\frac{t}{s}\right)^{\theta(2p-1)}\Sigma_1$, with $\Sigma_1$ as in Theorem \ref{CLT}-(i), determined by 
$$\omega_1= (1 - \theta)(2p-1), \ \omega_2=2-p, \ \alpha=1+ 3 \left(\frac{3(1-p)+\theta(2p-1)}{3-\theta(2p-1)}\right) \ \text{and} \ \beta=2+\alpha.$$

\item[(ii)] Let $a=2, c=0$ and $K=3$. Then there is a critical curve $p_c= \frac{3}{8\theta} + \frac{1}{2}$ such that for all $p < p_c$, in $D[0, \infty)$
\begin{equation*}
\frac{1}{\sqrt{n}}\left[(R_{\floor*{tn}},B_{\floor*{tn}}) - tn\left(\frac{2(1-p)}{3 - 2\theta(2p-1)},1-\frac{2(1-p)}{3 - 2\theta(2p-1)}\right) \right] \xrightarrow{d} W_t,
\end{equation*}
as $n$ diverges, where $W_t$ as defined in Theorem \ref{continuous} and, for $0 < s \le t$, $\E(W_s W_t^T) = s\left(\frac{t}{s}\right)^{2\theta(2p-1)}\Sigma_1$, with $\Sigma_1$ as in Theorem \ref{CLT}-(i), determined by 
$$\omega_1= 1+2(1 - \theta)(2p-1), \ \omega_2=2(1-p), \ \alpha=4 \left( \frac{3(1-p)}{3-2\theta(2p-1)}\right) \ \text{and} \ \beta=\frac{3}{2}\alpha.$$

Moreover, if $p=p_c$ and $n \to \infty$, in $D[0, \infty)$
{\small
\begin{equation*}
\frac{1}{\sqrt{n^t\log(n)}}\left[(R_{\floor*{n^t}},B_{\floor*{n^t}}) - n^t\left(\frac{4(1-p)}{3},1-\frac{4(1-p)}{3}\right) \right] \xrightarrow{d} W_t,
\end{equation*}
}
where $W_t$ as in Theorem \ref{continuous} and for $0 < s \le t$, $\E(W_s W_t^T) = s \Sigma_2$ with $\Sigma_2$ as in Theorem \ref{CLT}-(ii), determined by
$$\omega_1= \frac{3}{2}+4(p-1), \ \omega_2=2(1-p), \ \alpha_c=8(1-p) \ \text{and} \ \beta_c=\frac{3}{2}\alpha_c.$$
\item[(iii)] For any model with $K= a-c \ge 1$, there is a critical curve at $p_c= \frac{1}{4\theta} + \frac{1}{2}$. Then, for all $p < p_c$ as $n \to \infty$, in $D[0, \infty)$

\begin{equation*}
\frac{1}{\sqrt{n}}\left[(R_{\floor*{tn}},B_{\floor*{tn}}) - tn\left(\frac{1-p}{1 - \theta(2p-1)},1-\frac{1-p}{1- \theta(2p-1)}\right) \right] \xrightarrow{d} W_t,
\end{equation*}
with $W_t$ as defined in Theorem \ref{continuous} and, for $0 < s \le t$, $\E(W_s W_t^T) = s\left(\frac{t}{s}\right)^{K\theta(2p-1)}\Sigma_1$, with $\Sigma_1$ as in Theorem \ref{CLT}-(i), determined by 
$$\omega_1= K(1 - \theta)(2p-1), \ \omega_2=K(1-p), \ \text{and} \ \alpha=\beta=K^2 \left( \frac{(1-p)}{1-\theta(2p-1)}\right).$$

Moreover, if $p=p_c$ and $n \to \infty$, in $D[0, \infty)$
{\small
\begin{equation*}
\frac{1}{\sqrt{n^t\log(n)}}\left[(R_{\floor*{n^t}},B_{\floor*{n^t}}) - n^t\left(2(1-p),1-2(1-p)\right) \right] \xrightarrow{d} W_t,
\end{equation*}
}
with $W_t$ as in Theorem \ref{continuous} and for $0 < s \le t$, $\E(W_s W_t^T) = s \Sigma_2$ with $\Sigma_2$ as in Theorem \ref{CLT}-(ii), determined by
$$\omega_1= \frac{K}{2}(4p-3), \ \omega_2=K(1-p), \ \text{and} \ \alpha_c=\beta_c=2K^2(1-p).$$

\end{itemize}
\end{corollary}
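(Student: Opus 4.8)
The plan is to obtain Corollary \ref{coro} as a direct specialization of Theorem \ref{continuous}, so the argument is one of verification rather than of new ideas. For each of the three families of replacement matrices I would proceed in the same three steps. First, I would check that the stated parameters define a tenable urn in the sense of (a)--(d): conditions (a) and (b) are immediate from $T_0 = R_0 + B_0 > 0$ and the balance $K = a+b = c+d$, condition (c) holds because $a \neq c$ in each case, and (d) holds since all entries are non-negative. In item (iii) it is worth recording that the constraint $K = a-c$ together with balance forces $b = K - a = -c$, so non-negativity leaves only $c = 0$, $b = 0$, $a = d = K$; that is, item (iii) is exactly the diagonal (generalized) P\'olya matrix $\left(\begin{smallmatrix} K & 0 \\ 0 & K \end{smallmatrix}\right)$.

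The second step is to locate the parameters within the diffusive/critical dichotomy of Theorem \ref{continuous}, whose boundary is the solution set of $2\theta(2p-1)(a-c) = K$. Solving for $p$ gives the critical curve $p_c = \frac{K}{a-c}\frac{1}{4\theta} + \frac{1}{2}$; substituting $(a-c,K) = (2,3)$ yields $p_c = \frac{3}{8\theta} + \frac12$ for item (ii), and $a-c = K$ yields $p_c = \frac{1}{4\theta} + \frac12$ for item (iii). For $p < p_c$ the inequality $2\theta(2p-1)(a-c) < K$ holds and Theorem \ref{continuous}-(i) applies, whereas $p = p_c$ places us in the critical regime of Theorem \ref{continuous}-(ii). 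For the KRW of item (i), instead, $2\theta(2p-1)(a-c) = 2\theta(2p-1) \le 2 < 3 = K$ for every $(p,\theta) \in [0,1]^2$, so the model is diffusive throughout and no critical curve meets the parameter square; this is why item (i) is stated for all admissible $(p,\theta)$.

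The third step is the substitution itself. In the diffusive regime I would insert each triple $(a,c,K)$ into the limiting mean of Theorem \ref{LGN}, into $\omega_1$ and $\omega_2$ from \eqref{omegas}, into $\alpha$ and $\beta$ from Theorem \ref{CLT}-(i), and into the self-similarity exponent $\theta(2p-1)(a-c)$ appearing in \eqref{corri}, then simplify; for instance in item (i) one checks $\omega_1 = (1-\theta)(2p-1)$, $\omega_2 = 2-p$ and $\beta = 2 + \alpha$, and in item (iii) the relation $c = 0$ collapses $\beta = K(\alpha+ac)/(a+c)$ to $\beta = \alpha$. In the critical regime I would instead use Theorem \ref{CLT}-(ii): the key simplification is that the critical identity $\theta(a-c)(2p-1) = K/2$ collapses the denominator $K - \theta(a-c)(2p-1)$ to $K/2$, turns the general $\omega_1$ of \eqref{omegas} into $\omega_1 = \frac{K}{2} - 2a + 2p(a-c)$, and reduces $\alpha$ to $\alpha_c = c^2 + 2(a+c)\big((a-c)(1-p) + \frac{c}{2}\big)$; evaluating these at $(a,c,K) = (2,0,3)$ and $(K,0,K)$ yields the stated critical constants, while \eqref{corrii} supplies $\E(W_s W_t^T) = s\Sigma_2$. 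The only non-mechanical part of the whole argument is this critical substitution together with the algebraic simplification of $\alpha$ and $\alpha_c$; everything else is routine bookkeeping, and no step presents a genuine obstacle, since the corollary is an immediate consequence of the functional central limit theorem.
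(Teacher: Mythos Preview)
Your proposal is correct and matches the paper's approach exactly: the paper states the corollary without proof, remarking only that it is ``an immediate consequence of the previous theorem,'' and your three-step verification (tenability, location relative to the critical curve $2\theta(2p-1)(a-c)=K$, and direct substitution into the constants of Theorems \ref{CLT} and \ref{continuous}) is precisely how that immediacy is cashed out. Your observation that the constraint $K=a-c$ in item (iii) forces $b=c=0$ and hence the diagonal matrix $\left(\begin{smallmatrix} K & 0\\ 0 & K\end{smallmatrix}\right)$ is a useful clarification the paper leaves implicit.
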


The last Corollary is related to the elephant random walks studied in \cite{BB}. In particular, if $K=1$ and $\theta=1$, we recover their results. In general, the critical curve appears in models which $K \in [a-c, 2(a-c)]$.

\begin{figure}[h!]
\begin{center}
\epsfig{file=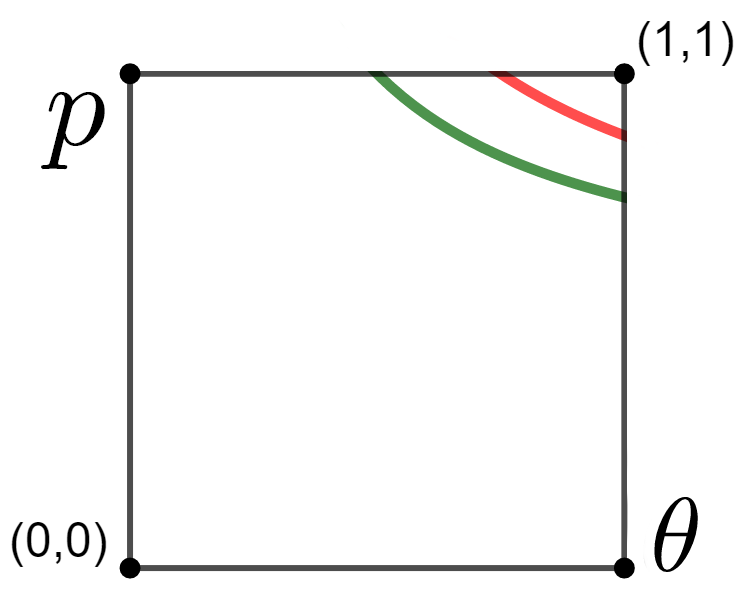, height=4cm}
\end{center}
\caption{Phase transition {diagram.} Red and green lines represent, respectively, the critical curves for items $(ii)$ and $(iii)$ of Corollary \ref{coro}.}
\label{fig:critical}
\end{figure}

\bigskip

\section{Proofs}
\label{sec:proof}

In \cite{Jan}, Janson established a powerful approach to obtain functional limit theorems for branching processes, which can be applied to the study of P\'olya-type urns. In this sense, recent works \cite{Jan2,JP} were dedicated to analyze limit behaviour of the random vector $(R_n,B_n)$ in the context of the so-called random replacement matrix. In that case, the {columns} are given by random vectors. In other words, at each step the number of balls added into the urn is given by the realization of random variables.

In this sense, let the random vectors $\xi_i = (\xi_{i1},\xi_{i2})^T$, for $i=1,2$, which represent the {columns} of the random replacement matrix. That is, if we pick a red ball, then we add to the urn a random number of balls, sampled from the vector $(\xi_{11},\xi_{12})^T$, otherwise picking a blue ball we add a sample of the random vector $(\xi_{21},\xi_{22})^T$.

{We recall that, in view of the urn models with random reinforcement matrix approach proposed by \cite{Jan, Jan2, JP} it suffices to know the moments of ${\xi_{ij}} \ ;  \ i,j\in\{1,2\}$.} Therefore, we aim to obtain the distribution of random variables $\xi_{ij}$ in such a way that the associated urn model, with random reinforcement matrix, has the same law as the urn process defined in Section \ref{sec:the model}. In this direction, we need to construct an appropriated random reinforcement matrix. For this, we assume that $\xi_{i} \in \{(a,b)^T,(c,d)^T\}$, for $i=1,2$, and let

\begin{equation}
\label{Proba}
\begin{array}{ll}
\P ( \xi_{1} = (a,b)^T) = (2p-1)\theta + (1-p) \ ; \\
\P ( \xi_{1} = (c,d)^T) = 1 -\P ( \xi_{1} = (a,b)^T) \ ;\\
\P ( \xi_{2} = (a,b)^T) = 1-p \ ; \\
\P ( \xi_{2} = (c,d)^T) = 1-\P ( \xi_{2} = (a,b)^T)  \ .
\end{array}
\end{equation}

By a straightforward calculation we conclude that the conditional probabilities for $(R_n, B_n)$ of the urn model in Section \ref{sec:the model} are equal to those obtained by the urn process with random reinforcement matrix defined by random variables $\{\xi_{ij}\}$ as given by \eqref{Proba}.

Then, we define the matrix
\begin{equation}
\label{Jan}
 A = \left(
\begin{array}{cc}
\E(\xi_{11}) & \E(\xi_{21}) \\
\E(\xi_{12}) & \E(\xi_{22})
\end{array}
\right) \ .
\end{equation}

From \eqref{Proba} we get 

\begin{equation}
\label{Exp}
\begin{array}{c}
\E(\xi_{11}) = c + (a-c) (\theta (2p-1) + (1-p)),  \  \E(\xi_{12}) = K - \E(\xi_{11}),\\[0.4cm]
\E(\xi_{21}) = c + (a-c)(1-p) \ \text{ and }  \  \E(\xi_{22}) = K-\E(\xi_{21}).
\end{array}
\end{equation}

Let be $\lambda_1$ and $\lambda_2$ the eigenvalues of matrix $A$ in \eqref{Jan}. A general form is given by
\begin{equation}
\lambda_1 = \dfrac{\E(\xi_{11}) + \E(\xi_{22}) + \sqrt{\E(\xi_{11})^2 + 4 \E(\xi_{12}) \E(\xi_{21}) - 2 \E(\xi_{11}) \E(\xi_{22}) + \E(\xi_{22})^2}}{2}
\end{equation}
and
\begin{equation}
\lambda_2 = \dfrac{\E(\xi_{11}) + \E(\xi_{22}) - \sqrt{\E(\xi_{11})^2 + 4 \E(\xi_{12}) \E(\xi_{21}) - 2 \E(\xi_{11}) \E(\xi_{22}) + \E(\xi_{22})^2}}{2} \ .
\end{equation}

Finally, by a straightforward calculation we conclude that
\begin{equation}
\label{autova}
\lambda_1 = K \ \ \text{ and } \ \ \lambda_2 =  \theta (a-c)(2p-1).
\end{equation}
Note that, $\lambda_2$ is actually the difference $\E(\xi_{11}) - \E(\xi_{21})$, and $\lambda_1$ is given by $\E(\xi_{i1}) + \E(\xi_{i2}), i=1,2$.

Using the normalization due to \cite{Jan,Jan2}, the correspondent right and left eigenvectors are given by
\begin{equation}
\label{autove1}
v_1 = \frac{1}{K-\theta (a-c)(2p-1)} {{c+(a-c)(1-p)}\choose {K-c-(a-c)(\theta(2p-1)+1-p)}}  
\end{equation}
\begin{equation}
\label{autove2}
v_2 =   \frac{1}{K-\theta (a-c)(2p-1)} {{1}\choose {-1}}
\end{equation}
and 
\begin{equation}
\label{autove3}
u_1 =    {{1}\choose {1}} \ \ ; \ \ 
u_2 =   {{K - c - (a-c) (\theta (2p-1) + (1-p))}\choose {c + (a-c)(1-p)}}.
\end{equation}

The last equations provide the spectrum of $A$. By standard arguments, it follows that conditions (A1)-(A6) and also the non-extinction condition from \cite{Jan} are satisfied. Since these are the main ingredients to obtain the limit theorems in \cite{Jan,Jan2}, we are then ready to present the explicit proofs.

\begin{proof}[Proof of Theorem \ref{LGN}] Theorem \textbf{3.21} from \cite{Jan} states that 
$$
n^{-1}(R_n, B_n) \longrightarrow \lambda_1 v_1 \ .
$$

Since $T_n = nK + T_0$, we use \eqref{autova} and \eqref{autove1} in the last display to finish the proof.
\end{proof}

\begin{proof}[Proof of Theorem \ref{CLT}]

For item $(i)$, we remark that Theorem \textbf{3.22} of \cite{Jan} to our case says that the limiting covariance matrix is given by
\begin{equation}\label{integral}
\Sigma_1 = \int_{0}^{\infty}\psi_A(s) B \psi_A(s)^T e^{- s}ds -  v_1 v_1^T \ ,
\end{equation}
where $\psi_A(s) = e^{sA} - v_1 (1 \ 1)\int_{0}^{s}e^{tA} dt$  and $B=\sum_{i=1}^2 v_{1i}B_i$ , with $B_i = \E[\xi_i \xi_i^T]$. To compute the $B$-matrix, consider \eqref{Proba}, using notation $\P ( \xi_{i} = (a,b)^T)= \P ( \xi_{i1} = a)$ and $\P ( \xi_{i} = (c,d)^T)= \P ( \xi_{i1} = c)$, we obtain
\begin{equation}
\E[\xi_i \xi_i^T] = \left(\begin{array}{c}a\\b\end{array}\right) \left(\begin{array}{c}a\\b\end{array}\right)^T\P(\xi_{i1} = a) + \left( \begin{array}{c}c\\d\end{array}\right)\left( \begin{array}{c}c\\d\end{array}\right)^T \P(\xi_{i1} = c),
\end{equation}
for $i =1,2$. Now use the fact that $\P ( \xi_{i1} = c) = 1 - \P ( \xi_{i1} = a)$, then
\begin{equation*}
B =\left( \begin{array}{c}c\\d\end{array}\right)\left( \begin{array}{c}c\\d\end{array}\right)^T + \left[ \left(\begin{array}{c}a\\b\end{array}\right) \left(\begin{array}{c}a\\b\end{array}\right)^T- \left( \begin{array}{c}c\\d\end{array}\right)\left( \begin{array}{c}c\\d\end{array}\right)^T \right]\left( v_{11}\P(\xi_{11} = a) + v_{12}\P(\xi_{21} = c)\right).
\end{equation*}
Note that, by $b=K-a$ and $d=K-c$, we obtain
\begin{equation*}
\left( \begin{array}{c}c\\d\end{array}\right)\left( \begin{array}{c}c\\d\end{array}\right)^T = c \left( \begin{array}{cc} 1 & -1 \\ -1 & 1 \end{array} \right) + Kc \left( \begin{array}{cc}  0 & 1 \\ 1 & -2 \end{array} \right) + K^2 \left( \begin{array}{cc}  0 & 0 \\ 0 & 1 \end{array} \right),
\end{equation*}
and
\begin{equation*}
\left(\begin{array}{c}a\\b\end{array}\right) \left(\begin{array}{c}a\\b\end{array}\right)^T- \left( \begin{array}{c}c\\d\end{array}\right)\left( \begin{array}{c}c\\d\end{array}\right)^T =(a^2-c^2) \left( \begin{array}{cc} 1 & -1 \\ -1 & 1 \end{array} \right) + K(a-c) \left( \begin{array}{cc}  0 & 1 \\ 1 & -2 \end{array} \right).
\end{equation*}
Finally
\begin{equation}
B = \alpha \left( \begin{array}{cc} 1 & -1 \\ -1 & 1 \end{array} \right) + \beta \left( \begin{array}{cc}  0 & 1 \\ 1 & -2 \end{array} \right) + K^2 \left( \begin{array}{cc}  0 & 0 \\ 0 & 1 \end{array} \right),
\end{equation}
where $\alpha$ and $\beta$ as in Theorem \ref{teoi}(i). Therefore we are able to compute $\Sigma_1$. We compute directly $\psi_A(s)$ by using $A= V \Lambda V^{-1}$, where
$$V=\left( \begin{array}{cc} u_{11} & v_{11} \\ u_{12} & v_{12} \end{array} \right) \ \text{ and } \ \Lambda = \left( \begin{array}{cc}  \lambda_1 & 0 \\ 0 & \lambda_2 \end{array} \right), $$
then $e^A = V e^{\Lambda}V^{-1}$, and finally we obtain

%
%Making the computations and writing in a convenient way we get that $B$-matrix is given by the following sum
%$$
%\left(
%\begin{array}{cc} 
%c^2 & c(K-c) \\
%c(K-c) & (K-c)^2
%\end{array}
%\right)
%+
%\frac{K(1-p) + c\theta(2p-1)}{K-\theta(2p-1)(a-c)} 
%\left(
%\begin{array}{cc} 
%a^2-c^2 & a(K-a)-c(K-c) \\
%a(K-a)-c(K-c) & (K-a)^2-(K-c)^2
%\end{array}
%\right)
%$$

\begin{eqnarray*}
%\Sigma_1 & = & \frac{\E(\xi_{12}) \E(\xi_{21})}{(1-2\lambda_2)(\E(\xi_{12}) + \E(\xi_{21}))^2} \left(\begin{array}{cc} 1 & -1 \\ -1 & 1 \end{array}\right) \\
\Sigma_1 & = & \frac{u_2^TBu_2}{\lambda_1-2\lambda_2} v_2v_2^T  \\[0.3cm]
         & = & \frac{\alpha u_2^T\left( \begin{array}{cc} 1 & -1 \\ -1 & 1 \end{array} \right) u_2+ \beta u_2^T \left( \begin{array}{cc}  0 & 1 \\ 1 & -2 \end{array} \right)u_2 + K^2 u_2^T \left( \begin{array}{cc}  0 & 0 \\ 0 & 1 \end{array} \right)u_2}{(K-2\theta(a-c)(2p-1))(K-\theta(a-c)(2p-1))^2}\left(\begin{array}{cc} 1 & -1 \\ -1 & 1 \end{array}\right) \\[0.3cm]
                  & = & \frac{(u_{21}-u_{22})\left((u_{21}-u_{22})\alpha + 2u_{22}\beta\right)+u_{22}^2K^2}{(K-2\theta(a-c)(2p-1))(K-\theta(a-c)(2p-1))^2}\left(\begin{array}{cc} 1 & -1 \\ -1 & 1 \end{array}\right) \ , 
\end{eqnarray*}
where $u_{21}-u_{22}=\omega_1$ and $u_{22}=\omega_2$ as in \eqref{omegas}, it finishes the proof of item $(i)$. Before proving item $(ii)$ let us recall a special case of covariance matrix $\Sigma_1$. Let $K=a-c \geq 1$ and get that $b=d=0$. In this situation, $\alpha=\beta$ and $u_2 = K \left(p-\theta(2p-1),1-p\right)^T$ as in Corollary \ref{coro}(iii), therefore 
$$
\Sigma_1 = \frac{K(1-p)\left[(2p-1)(1-\theta)+(1-p)\right]}{(1-2\theta(2p-1))(1-\theta(2p-1))^2} \left(\begin{array}{cc} 1 & -1 \\ -1 & 1 \end{array}\right) \ .
$$

Now let's prove item $(ii)$. For this we use the fact that $A$ is diagonalizable combined with Theorem \textbf{3.23} and Corollary \textbf{5.3-(i)} of \cite{Jan} to obtain the following limiting covariance matrix

\begin{equation}\label{critical}
\Sigma_2 = u_2^TBu_2 v_2 v_2^T \ .
\end{equation}
%where $Id$ is the $2\times 2$ identity matrix. 
By a direct computation of the right-hand term in last equation, we conclude the proof.
\end{proof}

\begin{proof}[Proof of Theorem \ref{continuous}] We start by proving $(i)$. The convergence statement is directly provided by Theorem $\textbf{3.31}-(i)$ of \cite{Jan}. If we combine this with Remark \textbf{5.7} of the same paper, we get that the limiting covariance matrix is given by

\begin{equation}
\E(W_s W_t^T) = s \Sigma_1 e^{\log(t/s) A^T}.
\end{equation}
Once more we use the fact that $A$ is diagonalizable to compute directly the right hand term of the above equation. This proves $(i)$. For the convergence assertion $(ii)$ we apply Theorem $\textbf{3.31}-(ii)$ of \cite{Jan}, with $d=0$, since $A$ is diagonalizable. Now let us compute its covariance matrix
$$
\E(W_s W_t^T) = s v_2u_2B u_2^Tv_2^T \,
$$
and once again, a direct computation finishes the proof.

\end{proof}

\bigskip

\subsection*{Acknowledgements}
MGN was supported by Funda\c{c}\~ao de Amparo \`a Pesquisa do Estado de S\~ao Paulo, FAPESP (grant 2015/02801-6). 
RL is partially supported by FAPESP (grant 2014/19805-1), and CNPQ PDJ grant (process 406324/2017-4). He kindly thanks Edmeia da Silva for her teachings in linear algebra. This work is part of the Universal FAPEMIG project ``Din\^amica de recorr\^encia para shifts aleat\'orios e processos com lapsos de mem\'oria" (process APQ-00987-18).

\smallskip

\medskip

{\scriptsize
{\sc Departamento de Estad\'i{}stica, Universidad del B\'io-B\'io. Avda. Collao 1202, CP 4051381, Concepci\'on, Chile.} E-mail address: magonzalez@ubiobio.cl\\
%. Rua do Mat\~{a}o, 1010, CEP 05508-090, S\~{a}o Paulo

{\sc Faculdade de Matem\'atica, Universidade Federal de Uberl\^andia, Av. Jo\~ao Naves de Avila, 2121, CEP 38408-100, Uberl\^andia, MG, Brasil.} E-mail address: rodrigolambert@ufu.br

\end{document}